\documentclass[a4paper,10pt]{amsart}
\usepackage{epsfig}
\usepackage{graphicx,color}
\usepackage{amsmath,amsfonts,amssymb,eufrak}
\usepackage{float}
\usepackage{graphics}
\usepackage{latexsym}
\usepackage{float}
\usepackage{verbatim}
\usepackage{xy}
\xyoption{matrix}
\xyoption{arrow}
\input xy 
\usepackage{rotating}
\xyoption{all}         

\newtheorem{lem}{Lemma}[section]
\newtheorem{prop}[lem]{Proposition}
\newtheorem{cor}[lem]{Corollary}
\newtheorem{thm}[lem]{Theorem}

\newtheorem*{thmnn}{Theorem}

\newcommand{\Ext}{\operatorname{Ext}\nolimits}

\newcommand{\mo}{\operatorname{mod}\nolimits}

\newcommand{\Hom}{\operatorname{Hom}\nolimits}
\newcommand{\End}{\operatorname{End}\nolimits}
\newcommand{\add}{\operatorname{add}\nolimits}
\newcommand{\Dim}{\operatorname{dim}\nolimits}

\begin{document}
\title{Finite mutation classes of coloured quivers}
\author{Hermund Andr\' e Torkildsen}

\begin{abstract} We consider the general notion of coloured quiver
  mutation and show that the mutation class of a coloured quiver $Q$,
  arising from an $m$-cluster tilting object associated with $H$, is
  finite if and only if $H$ is of finite or tame representation type,
  or it has at most $2$ simples. This generalizes a result known for
  $1$-cluster categories.
\end{abstract}

\maketitle


\section*{Introduction} 

Mutation of skew-symmetric matrices, or equivalently quiver mutation,
is very central in the topic of cluster algebras \cite{fz}. Quiver 
mutation induces an equivalence relation on the set of quivers. The 
mutation class of a quiver $Q$ consists of all quivers mutation
equivalent to $Q$. In \cite{br} it was shown that the mutation class
of an acyclic quiver $Q$ is finite if and only if the underlying graph 
of $Q$ is either Dynkin, extended Dynkin or has at most two vertices.

Cluster categories were defined in \cite{bmrrt} in the general case and
in \cite{ccs} in the $A_n$-case as a categorical model of the
combinatorics of cluster algebras. Some cluster categories have a nice
geometric description in terms of triangulations of certain polygons,
see \cite{ccs, s}. This was used in \cite{to,bto} to count the number
of quivers in the mutation classes of quivers of Dynkin type $A$ and 
$D$. In \cite{brs} they used different methods to count the number of
quivers in the mutation classes of quivers of type $\widetilde{A}$.

A generalization of cluster categories, the $m$-cluster categories,
have been investigated by several authors. See for example
\cite{bm1,bm2,bt,iy,k,t,w,z,zz}. In \cite{bt} 
mutation on coloured quivers was defined, and we can define mutation 
classes of coloured quivers. It is a natural question to ask when the 
mutation classes of coloured quivers are finite. In this paper we 
want to show the following theorem, analogous to the main theorem in 
\cite{br}. 

\begin{thmnn}
Let $k$ be an algebraically closed field and $Q$ a connected finite
quiver without oriented cycles. The following are equivalent for $H =
kQ$.
\begin{enumerate}
\item There are only a finite number of basic $m$-cluster tilted algebras
  associated with $H$, up to isomorphism. 
\item There are only a finite number of Gabriel quivers occurring for
  $m$-cluster tilted algebras associated with $H$, up to isomorphism.
\item $H$ is of finite or tame representation type, or has at most two
  non-isomorphic simple modules.
\item There are only a finite number of $\tau$-orbits of cluster
  tilting objects associated with $H$. 
\item There are only a finite number of coloured quivers occurring for
  $m$-cluster tilting objects associated with $H$, up to isomorphism.
\item The mutation class of a coloured quiver $Q$, arising from an
  $m$-cluster tilting object associated with $H$, is finite. 
\end{enumerate}
\end{thmnn}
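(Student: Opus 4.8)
The plan is to prove the six conditions equivalent by establishing the forward chain $(3)\Rightarrow(4)\Rightarrow\{(1),(5)\}$, together with $(1)\Rightarrow(2)$ and $(5)\Rightarrow(6)$, and then closing the cycle with the two reverse implications $(2)\Rightarrow(3)$ and $(6)\Rightarrow(3)$, both proved contrapositively. The formal backbone is that the AR-translate $\tau$ of the $m$-cluster category acts on $m$-cluster tilting objects, and that both the endomorphism algebra $\End(T)$ (the $m$-cluster tilted algebra) and the coloured quiver of $T$ are invariant under $T\mapsto\tau T$; moreover the Gabriel quiver of $\End(T)$ is exactly the colour-$0$ subquiver of the coloured quiver of $T$. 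Granting these, $(4)\Rightarrow(1)$ and $(4)\Rightarrow(5)$ are immediate, since finitely many $\tau$-orbits force finitely many values of a $\tau$-invariant assignment; $(1)\Rightarrow(2)$ holds because the Gabriel quiver is an isomorphism invariant of the algebra; and $(5)\Rightarrow(6)$ holds because the mutation class of any single coloured quiver is contained in the set of all coloured quivers of $m$-cluster tilting objects. I would also record that all $m$-cluster tilting objects for $H$ lie in a single mutation class, so that this last containment is in fact an equality, a fact I will reuse in the reverse direction.

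The representation-theoretic input is the implication $(3)\Rightarrow(4)$, which I would prove by inspecting the AR-structure of the $m$-cluster category and locating the indecomposable summands that a cluster tilting object may use. If $H$ is of finite type, the $m$-cluster category has only finitely many indecomposables, hence finitely many cluster tilting objects and a fortiori finitely many $\tau$-orbits. If $H$ is tame, the only new indecomposables are regular; homogeneous tubes have rank one and contribute no rigid (and hence no $m$-rigid) summands, while the finitely many exceptional tubes each contribute only finitely many rigid indecomposables lying in finitely many $\tau$-orbits. Combined with the transjective part, which already has finitely many $\tau$-orbits, this bounds the number of $\tau$-orbits of summands and hence of cluster tilting objects. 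If $H$ has at most two simples, a direct rank-$\le 2$ analysis of the exchange combinatorics, mirroring the corresponding case of \cite{br}, again yields finitely many $\tau$-orbits.

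For the reverse direction I would assume $\neg(3)$, that is, that $H$ is wild with at least three simples, and produce infinitely many pairwise non-isomorphic Gabriel quivers among the $m$-cluster tilted algebras associated with $H$; this gives $\neg(2)$ directly, and since these algebras all arise from one mutation class it gives $\neg(6)$ as well. Concretely I would adapt the construction of \cite{br} from the case $m=1$: in a wild hereditary algebra with at least three simples one can, by iterated mutation, reach coloured quivers whose colour-$0$ part has arbitrarily many arrows, so that no finite list of Gabriel quivers can occur.

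The step I expect to be the main obstacle is exactly this last construction. In the $m=1$ setting the growth of the number of arrows is governed by ordinary quiver mutation, but coloured quiver mutation as defined in \cite{bt} mixes all of the colours, so the colour-$0$ subquiver does not evolve by ordinary mutation and its arrows cannot be tracked by the rank-$2$ subquiver estimates used for $m=1$. Making the unboundedness argument work therefore requires either a careful analysis of the colour-$0$ behaviour under coloured mutation, or a reduction identifying the relevant $m$-cluster tilted quivers with quivers already produced in the $m=1$ case. I would first attempt the reduction, falling back on a direct coloured-mutation estimate if the reduction proves too lossy.
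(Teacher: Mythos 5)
Your logical skeleton closes correctly, and the easy implications ($(4)\Rightarrow(1),(5)$ via $\tau$-invariance of $Q_T$ and of $\End(T)$, $(1)\Rightarrow(2)$, $(5)\Rightarrow(6)$ via the fact that all $m$-cluster tilting objects lie in one mutation class) match the paper's. But two of the substantive steps have genuine gaps. First, in the tame case of $(3)\Rightarrow(4)$ you argue that the indecomposable $m$-rigid objects fall into finitely many $\tau$-orbits and conclude that this ``bounds the number of $\tau$-orbits of summands and hence of cluster tilting objects.'' That inference is invalid: a cluster tilting object is determined not only by the $\tau$-orbits of its $n$ summands but also by their relative positions within those orbits, and applying $\tau$ to the whole object lets you normalize only one of these $n$ integers. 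The same inference applied to the transjective part of a \emph{wild} algebra (which likewise has only finitely many $\tau$-orbits of indecomposables) would ``prove'' finiteness there, contradicting $(2)\Rightarrow(3)$. What is actually needed is to anchor one summand and bound the others relative to it: the paper first shows (Proposition \ref{thm_lem}, itself requiring Proposition \ref{thm_regular} and Proposition \ref{tiltreached}) that in the tame case every $m$-cluster tilting object has, up to $\tau$, a summand which is a shift of an indecomposable projective $P[i]$, and then shows (Lemma \ref{projectiveshift}) that only finitely many indecomposables $X$ satisfy $\Ext^{k}_{\mathcal{C}_H^m}(X,P[i])=0$ for all $k\in\{1,\dots,m\}$. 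You omit both ingredients.

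Second, the implication $\neg(3)\Rightarrow\neg(2)$ --- which you yourself flag as the main obstacle --- is not actually proved; you only propose two possible strategies (tracking the colour-$0$ subquiver under coloured mutation, or a reduction to the $m=1$ case) and correctly observe that the first is problematic because coloured mutation mixes colours. The paper avoids mutation-tracking entirely and argues by Hom-dimensions: if only finitely many Gabriel quivers occur, with at most $u$ arrows between any two vertices, then since any path passing through two oriented cycles is zero (Lemma \ref{brlemma2}, resting on $\Hom_{\mathcal{C}_H^m}(X,Y)=\oplus_{i}\Hom_{\mathcal{D}^b(H)}(X,F^iY)$ with only $i=0,1$ contributing for rigid objects in a fundamental domain), one gets $\dim\Hom_{\mathcal{C}_H^m}(T_1,T_2)<u^{2n}$ for any two indecomposable summands of an $m$-cluster tilting object; but for wild $H$ with at least three simples, Lemma \ref{brlemma1} produces tilting $H$-modules --- hence $m$-cluster tilting objects --- with summands whose Hom-space has arbitrarily large dimension. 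Until you supply this argument or a working substitute, the equivalence is not established.
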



\section{Background}
Let $H=kQ$ be a finite dimensional hereditary algebra over an
algebraically closed field $k$, with $Q$ a quiver with $n$ vertices. 
The cluster category was defined in
\cite{bmrrt} and independently in \cite{ccs} in the $A_n$
case. Consider the bounded derived category $\mathcal{D}^{b}(H)$ of $\mo H$. Then the cluster category is defined as the orbit
category $\mathcal{C}_H = \mathcal{D}^b(H)/\tau^{-1}[1]$, where $\tau$
is the Auslander-Reiten translation and $[1]$ is the shift functor. 

As a generalization of cluster categories, we can consider the
$m$-cluster categories defined as $\mathcal{C}_{H}^{m} =
\mathcal{D}^b(H)/\tau^{-1}[m]$. The $m$-cluster category was shown in
\cite{k} to be triangulated. The $m$-cluster
category is a Krull-Schmidt category, an $(m+1)$-Calabi-Yau category,
and it has an AR-translate $\tau = [m]$. The indecomposable objects in
$\mathcal{C}_H^m$ are of the form $X[i]$, with $0 \leq i < m$, where
$X$ is an indecomposable $H$-module, and of the form $P[m]$, where $P$
is a projective $H$-module. 

An $m$-cluster tilting object is an object $T$ in $\mathcal{C}_H^m$
with the property that $X$ is in $\add T$ if and only if
$\Ext_{\mathcal{C}_H^m}^i(T,X)=0$ for all $i \in \{1,2,...,m\}$. It
was shown in \cite{w,zz} that an object which is maximal $m$-rigid,
i.e. it has the property that $X \in \add T$ if and only if
$\Ext_{\mathcal{C}_H^m}^i(T \oplus X,T \oplus X)=0$ for all $i \in
\{1,2,...,m\}$, is also an $m$-cluster tilting object. They also
showed that an $m$-cluster tilting object $T$ always has $n$
non-isomorphic indecomposable summands.    

An almost complete $m$-cluster tilting object $\bar{T}$ is an object
with $n-1$ non-isomorphic indecomposable direct summands such that
$\Ext_{\mathcal{C}_H^m}^i(\bar{T},\bar{T})=0$ for $i \in
\{1,2,...,m\}$. It is known from \cite{w,zz} that any almost complete
$m$-cluster tilting object has exactly $m+1$ complements, i.e. there
exist $m+1$ non-isomorphic indecomposable objects $T'$ such that
$\bar{T} \oplus T'$ is an $m$-cluster tilting object. 

Let $\bar{T}$ be an almost complete $m$-cluster tilting object and
denote by $T_k^{(c)}$, where $c\in \{0,1,2,...,m\}$, the complements
of $\bar{T}$. In \cite{iy} it is shown that the complements are
connected by $m+1$ exchange triangles 

$$T_k^{(c)} \rightarrow B_k^{(c)} \rightarrow T_k^{(c+1)}\rightarrow,$$
where $B_k^{(c)}$ are in $\add{\bar{T}}$.

An $m$-cluster tilted algebra is an algebra of the form
$\End_{\mathcal{C}_H^m}(T)$, where $T$ is an $m$-cluster tilting
object in $\mathcal{C}_H^m$.


\section{Coloured quiver mutation}

In the case when $m=1$ there is a well-known procedure for the exchange 
of indecomposable direct summands of a cluster-tilting object. Given an
almost complete cluster-tilting object, there exist exactly two complements,
and the corresponding quivers are given by quiver mutation. For an arbitrary
$m \geq 1$, the procedure is a little more complicated. Since 
an almost complete $m$-cluster tilting object has, up to isomorphism, exactly 
$m+1$ complements, the Gabriel quiver does not give enough 
information to keep track of the exchange procedure. Buan and Thomas therefore 
defined a class of coloured quivers in \cite{bt}, and they define a mutation 
procedure on such quivers to model the exchange on $m$-cluster tilting objects. 
In this section we recall some results from this paper.

To an $m$-cluster tilting object $T$, Buan and Thomas associate a
coloured quiver $Q_T$, with arrows of colours chosen from the set 
$\{0,1,2,...,m\}$. For each indecomposable summand of $T$ there is
a vertex in $Q_T$. If $T_i$ and $T_j$ are two indecomposable summands
of $T$ corresponding to vertex $i$ and $j$ in $Q_T$, there are $r$
arrows from $i$ to $j$ of colour $c$, where $r$ is the multiplicity of
$T_j$ in $B_i^{(c)}$. 

They show that such quivers have the following properties.
\begin{enumerate}
\item The quiver has no loops.
\item If there is an arrow from $i$ to $j$ with colour $c$, then there exist no arrow
from $i$ to $j$ with colour $c' \neq c$.
\item If there are $r$ arrows from $i$ to $j$ of colour $c$, then there are $r$
arrows from $j$ to $i$ of colour $m-c$.
\end{enumerate}

They also define coloured quiver mutation, and they give an algorithm
for the procedure. Let $Q=Q_T$, for an $m$-cluster tilting object $T$,
be a coloured quiver and let $j$ be a vertex in $Q$. The mutation of
$Q$ at vertex $j$ is a quiver $\mu_j(Q)$ obtained as follows. 

\begin{enumerate}
\item For each pair of
  arrows $$\xymatrix{i\ar[r]^{(c)}&j\ar[r]^{(0)}&k\\}$$ where $i \neq
  k$ and $c \in \{0,1,...,m\}$, add an arrow from $i$ to $k$ of colour
  $c$ and an arrow from $k$ to $i$ of colour $m-c$. 
\item If there exist arrows of different colours from a vertex $i$ to
  a vertex $k$, cancel the same number of arrows of each colour until
  there are only arrows of the same colour from $i$ to $k$. 
\item Add one to the colour of all arrows that goes into $j$, and
  subtract one from the colour of all arrows going out of $j$.
\end{enumerate}

See Figure \ref{figmutationexample} for an example.

  \begin{figure}[htp]
  \begin{center}
    $\xymatrix{1 \ar@/^/[r]^{(0)} & 2 \ar@/^/[l]^{(2)}\ar@/^/[r]^{(0)}&3\ar@/^/[l]^{(2)}}
\stackrel{\mu_1}{\longrightarrow}
    \xymatrix{1 \ar@/^/[r]^{(2)} & 2 \ar@/^/[l]^{(0)}\ar@/^/[r]^{(0)}&3\ar@/^/[l]^{(2)}}
\stackrel{\mu_1}{\longrightarrow}
    \xymatrix{1 \ar@/^/[r]^{(1)} & 2 \ar@/^/[l]^{(1)}\ar@/^/[r]^{(0)}&3\ar@/^/[l]^{(2)}}$
  \end{center}\caption{\label{figmutationexample} Examples of mutation
    of coloured quivers for Dynkin type A and $m=2$.}  
  \end{figure}

In \cite{bt} the following theorem is proved.

\begin{thm}
Let $T = \oplus_{i=1}^{n} T_i$ be an $m$-cluster tilting object in
$\mathcal{C}_H^m$. Let $T' = T / T_j \oplus T_j^{(1)}$ be an
$m$-cluster tilting object where there is an exchange triangle $$T_j
\rightarrow B_j^{(0)} \rightarrow T_j^{(1)} \rightarrow.$$ Then
$Q_{T'} = \mu_j(Q_T)$. 
\end{thm}

The quiver obtained from $Q_T$ by removing all arrows of colour
different from $0$ is the Gabriel quiver of the $m$-cluster tilted
algebra $\End_{\mathcal{C}_H^m}(T)$. Quivers of $m$-cluster tilted
algebras can be reached by repeated coloured quiver mutation
\cite{zz} (see also \cite{bt}). 

\begin{prop}\label{tiltreached}
Any $m$-cluster tilting object can be reached from any other
$m$-cluster tilting object via iterated mutation.
\end{prop}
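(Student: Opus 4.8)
The plan is to turn the statement into a connectivity statement relative to a fixed base object. Write $T\sim T'$ if $T$ can be reached from $T'$ by iterated mutation. Reflexivity and transitivity are clear, so the first step is to check that $\sim$ is symmetric. This follows from the cyclic structure of the exchange triangles: the $m+1$ complements $T_k^{(0)},\dots,T_k^{(m)}$ of an almost complete $m$-cluster tilting object $\bar T$ are linked by the triangles $T_k^{(c)}\to B_k^{(c)}\to T_k^{(c+1)}\to$ with indices taken cyclically modulo $m+1$, and by the exchange theorem a mutation at the corresponding vertex sends $\bar T\oplus T_k^{(c)}$ to $\bar T\oplus T_k^{(c+1)}$. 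Thus a single mutation is undone by $m$ further mutations at the same vertex, and $\sim$ is an equivalence relation. It therefore suffices to fix one $m$-cluster tilting object and show every other is equivalent to it. I would take the base object $T_0=H$, the direct sum of the indecomposable projectives, which is $m$-cluster tilting because $\Ext^i_{\mathcal C_H^m}(H,H)=0$ for $1\le i\le m$ and it has $n$ summands.

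The core of the argument is a reduction of an arbitrary $T$ to a module object by mutation. Recall that every indecomposable of $\mathcal C_H^m$ has the form $X[i]$ with $0\le i\le m-1$ and $X$ an indecomposable $H$-module, or $P[m]$ with $P$ indecomposable projective; assign to such a summand its shift degree $i\in\{0,1,\dots,m\}$, and let $d(T)$ be the sum of these degrees over the $n$ indecomposable summands of $T$. Since mutation at a vertex $j$ changes only the summand $T_j$, replacing it by the next complement $T_j^{(1)}$ along the exchange triangle $T_j\to B_j^{(0)}\to T_j^{(1)}\to$, the value of $d$ changes only by $\deg T_j^{(1)}-\deg T_j$. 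I would argue that whenever $d(T)>0$ one can choose a summand of maximal positive degree together with a mutation at its vertex that strictly decreases $d$, using the explicit form of the exchange triangles in the fundamental domain of $\mathcal D^b(H)$ and the fact that $B_j^{(0)}\in\add\bar T$. Iterating produces an object $T'$ with $d(T')=0$, that is, one all of whose summands lie in $\mo H$.

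Once $T'$ lies in $\mo H$, it has $n$ summands and satisfies $\Ext^1_H(T',T')=0$, so it is a classical tilting $H$-module, and the restriction of coloured mutation to module objects recovers ordinary tilting mutation (the relevant exchange triangle becomes the exchange sequence of tilting modules). The tilting modules of the hereditary algebra $H$ are connected under tilting mutation by the theorem of Happel and Unger, and $H$ is one of them, so $T'\sim T_0$. Combining the three steps with transitivity yields $T\sim T_0$ for every $m$-cluster tilting object $T$, which is the assertion.

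The main obstacle is the middle step: proving that a mutation strictly lowering $d$ always exists. The delicate point is that the degree behaviour under mutation is governed by whether $T_j^{(1)}$ is of the form $Y[i-1]$ or wraps around through a projective in degree $m$ (recall that $\tau=[m]$ identifies the top of the fundamental domain with modules again), so a naive choice of vertex may instead increase $d$. I expect that making $d$, or a suitably refined well-founded complexity, genuinely decreasing will require a careful case-by-case analysis of the exchange triangles, and this bookkeeping is where the real work lies.
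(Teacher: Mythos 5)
First, note that the paper itself does not prove this proposition: it is quoted from \cite{zz} (see also \cite{bt}), where it is established via the correspondence between $m$-cluster tilting objects and tilting modules in $\mo H'\vee \mo H'[1]\vee\cdots\vee \mo H'[m-1]$ for hereditary algebras $H'$ derived equivalent to $H$. So the comparison here is really with the literature, not with an in-paper argument.

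Your outer scaffolding is sound: symmetry of the reachability relation does follow from the cyclic arrangement of the $m+1$ complements, the reduction to a fixed base object $T_0=H$ is legitimate, and the final appeal to Happel--Unger connectivity of the tilting quiver of a hereditary algebra (together with the observation that each tilting-module exchange is realized by a mutation in $\mathcal{C}_H^m$, even though an almost complete tilting module may have only one complement inside $\mo H$) can be made to work. The genuine gap is the middle step, and you say so yourself: the claim that whenever $d(T)>0$ there is a mutation strictly decreasing $d$ is asserted, not proved, and ``this bookkeeping is where the real work lies'' is an accurate description of the entire content of the proposition. The difficulty is not mere bookkeeping. Mutation at a vertex $j$ replaces $T_j$ by the complement $T_j^{(1)}$ arising from the minimal left $\add(T/T_j)$-approximation $T_j\to B_j^{(0)}$, and there is no a priori control on the shift degree of the cone of such an approximation: choosing a summand of maximal degree and mutating forward can leave $d$ unchanged or increase it (the wrap-around through $P[m]\cong \tau^{-1}P$ that you mention is only one of the problems; the cone of an approximation between degree-$i$ summands need not live in degrees $\le i$). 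Without exhibiting a well-founded invariant that provably decreases, the argument reduces the proposition to an unproved statement of essentially the same depth. To close the gap you would either have to carry out the case analysis of exchange triangles in the fundamental domain in full, or follow the route of \cite{zz} and \cite{w}: show directly that every $m$-cluster tilting object is induced from a tilting module in $\mo H_0\vee\cdots\vee\mo H_0[m-1]$ for some $H_0$ derived equivalent to $H$, and connect these by combining tilting mutation within each such piece with APR-type tilts relating the different algebras $H_0$.
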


They obtain the following corollary.

\begin{cor}
For an $m$-cluster category $\mathcal{C}_H^m$ of the acyclic quiver
$Q$, all quivers of $m$-cluster tilted algebras are given by repeated 
mutation of $Q$.
\end{cor}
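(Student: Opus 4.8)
The plan is to realise every Gabriel quiver of an $m$-cluster tilted algebra associated with $H$ as the colour-$0$ part of a coloured quiver lying in a single coloured mutation class, and then to identify the base point of that class with $Q$. First I would fix as initial $m$-cluster tilting object the image $T_0$ of $H = \bigoplus_{i=1}^{n} P_i$ (the direct sum of the indecomposable projectives) under the canonical functor $\mo H \to \mathcal{C}_H^m$. It is standard that $T_0$ is $m$-rigid with $\End_{\mathcal{C}_H^m}(T_0) \cong \End_H(H) = H$, so by the maximal $m$-rigid criterion recalled in the background it is an $m$-cluster tilting object whose $m$-cluster tilted algebra is $H$ itself; in particular the colour-$0$ part of its coloured quiver $Q_{T_0}$ is exactly the Gabriel quiver $Q$ of $H = kQ$.

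Next, let $A = \End_{\mathcal{C}_H^m}(T)$ be an arbitrary $m$-cluster tilted algebra, with $T$ an $m$-cluster tilting object. By Proposition \ref{tiltreached}, $T$ can be reached from $T_0$ by a finite sequence of exchanges, each replacing a single indecomposable summand through one of its exchange triangles. By the Buan--Thomas theorem recalled above, each such single-summand exchange of the form $T_j \to B_j^{(0)} \to T_j^{(1)}$ is mirrored on coloured quivers by the mutation $\mu_j$, so the whole sequence yields $Q_T = \mu_{j_r}\cdots\mu_{j_1}(Q_{T_0})$. Finally, since the Gabriel quiver of $\End_{\mathcal{C}_H^m}(T)$ is obtained from $Q_T$ by deleting all arrows of colour different from $0$, the quiver of $A$ is the colour-$0$ part of a coloured quiver reached from $Q_{T_0}$ by repeated coloured quiver mutation. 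As $Q_{T_0}$ has colour-$0$ part $Q$, this is precisely the assertion that all quivers of $m$-cluster tilted algebras are given by repeated mutation of $Q$.

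The step I expect to be the main obstacle is reconciling the notion of exchange used in Proposition \ref{tiltreached} with the single mutation $\mu_j$ of the Theorem. The Theorem explicitly treats only the exchange $T_j^{(0)} \to T_j^{(1)}$, whereas an almost complete $m$-cluster tilting object has $m+1$ complements $T_j^{(0)}, \dots, T_j^{(m)}$ linked cyclically by exchange triangles. I would therefore verify that repeated application of $\mu_j$ cycles a vertex through the full list of complements, i.e. that $\mu_j$ carries $T_j^{(c)}$ to $T_j^{(c+1)}$ for every $c$ with indices read modulo $m+1$, so that the general iterated exchange of the Proposition is realised by repeated coloured quiver mutation with no complement missed. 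Once this cyclic compatibility is confirmed, the composition argument above goes through and the corollary follows.
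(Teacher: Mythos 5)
Your argument is correct and follows the same route the paper intends: the corollary is stated as an immediate consequence of Proposition \ref{tiltreached} together with the Buan--Thomas theorem that the exchange $T_j \to B_j^{(0)} \to T_j^{(1)}$ is mirrored by $\mu_j$, exactly as you compose them, with $T_0 = H$ giving the base point whose colour-$0$ part is $Q$. The cyclic-compatibility issue you flag is genuine but is already settled in \cite{bt}, where iterated application of $\mu_j$ is shown to run through all $m+1$ complements (indeed $\mu_j^{m+1}$ is the identity), so ``iterated mutation'' in the Proposition means precisely iterated single-step exchanges and your composition goes through.
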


Let us always denote by $Q_G$ the Gabriel quiver of the coloured
quiver $Q$. In this paper we are only interested in coloured quivers
which arises from an $m$-cluster tilting object. Let $Q_G$ be an
acyclic quiver and $Q$ the coloured quiver obtained from $Q_G$ by
adding the necessary arrows of colour $m$, i.e. if there exist $r$
arrows from $i$ to $j$ of colour $0$, then add $r$ arrows from $j$ to
$i$ of colour $m$. Then the quivers which arises from $m$-cluster
tilting objects are exactly the quivers mutation equivalent to $Q$.    

Let $Q$ be a coloured quiver with arrows only of colour $0$ and $m$,
as above, and where the underlying graph of the Gabriel quiver $Q_G$ is of
Dynkin type $\Delta$. Then certainly $Q_G$ is a quiver of an
$m$-cluster tilted algebra. Let us call the set of quivers mutation
equivalent to $Q$ the mutation class of type
$\Delta$. Certainly, all orientations of $\Delta$ (as a Gabriel
quiver) is in the mutation class of type $\Delta$.    

Figure \ref{figmutationclass} shows all non-isomorphic coloured
quivers in the mutation class of type $A_3$ for $m=2$.

  \begin{figure}[htp]
  \begin{center}
    $$\xymatrix{1 \ar@/^/[r]^{(0)} & 2
      \ar@/^/[l]^{(2)}\ar@/^/[r]^{(0)}&3\ar@/^/[l]^{(2)}} \quad\quad\quad
    \xymatrix{1 \ar@/^/[r]^{(2)} & 2 \ar@/^/[l]^{(0)}\ar@/^/[r]^{(0)}&3\ar@/^/[l]^{(2)}}$$
    $$\xymatrix{1 \ar@/^/[r]^{(1)} & 2
      \ar@/^/[l]^{(1)}\ar@/^/[r]^{(0)}&3\ar@/^/[l]^{(2)}}\quad\quad\quad
\xymatrix{1 \ar@/^/[r]^{(0)} & 2
  \ar@/^/[l]^{(2)}\ar@/^/[r]^{(1)}&3\ar@/^/[l]^{(1)}}$$
$$\xymatrix{1 \ar@/^/[r]^{(0)} & 2
      \ar@/^/[l]^{(2)}\ar@/^/[r]^{(2)}&3\ar@/^/[l]^{(0)}}\quad\quad\quad
\xymatrix{1 \ar@/^/[r]^{(1)} & 2
  \ar@/^/[l]^{(1)}\ar@/^/[r]^{(1)}&3\ar@/^/[l]^{(1)}}$$
$$\xymatrix{1 \ar@/^/[r]^{(1)} \ar@/_2.4pc/[rr]_{(0)} & 2
      \ar@/^/[l]^{(1)}\ar@/^/[r]^{(1)}&3\ar@/_2.4pc/[ll]_{(2)}\ar@/^/[l]^{(1)}}$$
  \end{center}\caption{\label{figmutationclass} All non-isomorphic coloured quivers
    in the mutation class of $A_3$ for $m=2$.}   
  \end{figure}
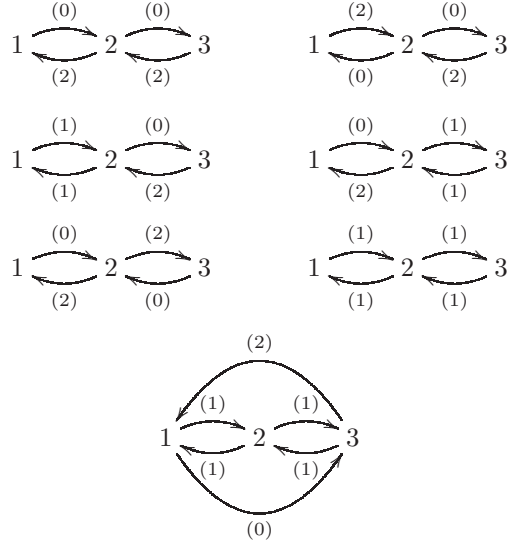

We note that in a mutation class, there can be several
non-isomorphic coloured quivers with the same underlying Gabriel
quiver, and that the Gabriel quiver of an $m$-cluster tilted algebra
might be disconnected.

To any $m$-cluster tilting object $T$ there exist a coloured quiver 
$Q_T$, but we also have the following. 

\begin{lem}\label{colouredquivertilting}
Suppose $Q$ is a coloured quiver in some mutation class of a
quiver of an $m$-cluster tilted algebra. Then there exist an
$m$-cluster tilting object $T$ such that $Q = Q_T$.   
\end{lem}
\begin{proof}
This follows directly from the corollary, since mutation of $m$-cluster
tilting objects corresponds to mutation of coloured quivers.
\end{proof}

We know that $[i]$ is an equivalence on the $m$-cluster category for
all integers $i$. In particular, $\tau = [m]$ is an equivalence.

\begin{prop}\label{colouredquiveriso}
If $T$ is an $m$-cluster tilting object, then $Q_T$ is isomorphic to 
$Q_{T[i]}$ for all $i$
\end{prop}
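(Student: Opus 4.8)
The plan is to exploit the fact, noted just above, that $[i]$ is a triangle autoequivalence of $\mathcal{C}_H^m$ for every integer $i$, so it preserves all the data defining the coloured quiver. Since $[i]$ is an equivalence, $T[i] = \bigoplus_k T_k[i]$ is again an $m$-cluster tilting object whose indecomposable summands are precisely the $T_k[i]$, and these are pairwise non-isomorphic because $[i]$ is fully faithful. This gives a bijection between the vertices of $Q_T$ and those of $Q_{T[i]}$, sending the vertex for $T_k$ to the vertex for $T_k[i]$; I will show this bijection is an isomorphism of coloured quivers.

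The key step is to track the exchange triangles under $[i]$. Fix a vertex $k$ and consider the almost complete $m$-cluster tilting object $\bar{T} = T/T_k$ with complements $T_k^{(c)}$, connected by the exchange triangles $T_k^{(c)} \rightarrow B_k^{(c)} \rightarrow T_k^{(c+1)} \rightarrow$. Applying the triangle functor $[i]$ yields triangles $T_k^{(c)}[i] \rightarrow B_k^{(c)}[i] \rightarrow T_k^{(c+1)}[i] \rightarrow$ in which each $B_k^{(c)}[i]$ lies in $\add(\bar{T}[i])$, where $\bar{T}[i] = T[i]/T_k[i]$. Hence these are exactly the exchange triangles for the almost complete $m$-cluster tilting object $\bar{T}[i]$. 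Since the anchor complement $T_k = T_k^{(0)}$ is sent to the actual summand $T_k[i]$, the indexing of complements by colour is preserved: the $c$-th complement of $\bar{T}[i]$ is $T_k^{(c)}[i]$ and the corresponding middle term is $B_k^{(c)}[i]$.

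It remains to compare arrow multiplicities. By definition, the number of arrows from $k$ to $j$ of colour $c$ in $Q_T$ is the multiplicity of $T_j$ in $B_k^{(c)}$. Because $[i]$ is an equivalence, the multiplicity of $T_j[i]$ as a summand of $B_k^{(c)}[i]$ equals the multiplicity of $T_j$ in $B_k^{(c)}$, and by the previous paragraph this is precisely the number of arrows from $k$ to $j$ of colour $c$ in $Q_{T[i]}$. Thus the vertex bijection preserves arrows together with their colours, giving $Q_T \cong Q_{T[i]}$.

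The only delicate point is the bookkeeping of the colour-labeling in the middle step: one must check that $[i]$ respects not merely the set of complements but the order in which the exchange triangles arrange them, anchored at $T_k^{(0)} = T_k$. This is where the triangulated structure does the work, since $[i]$ sends the distinguished triangle producing $T_k^{(c+1)}$ from $T_k^{(c)}$ to the one producing $T_k^{(c+1)}[i]$ from $T_k^{(c)}[i]$, so the complements are matched index by index rather than only as an unordered set; everything else is a direct consequence of $[i]$ being a triangle equivalence.
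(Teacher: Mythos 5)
Your proof is correct and follows essentially the same route as the paper: apply the shift (a triangle equivalence) to the exchange triangles, observe that they become the exchange triangles for $\bar{T}[i]$ anchored at $T_k[i]$ with the colour indexing preserved, and conclude that arrow multiplicities are unchanged. The only cosmetic difference is that the paper first reduces to the case $[\pm 1]$ and iterates, whereas you handle $[i]$ directly; you are also somewhat more explicit than the paper about why the colour labels of the complements are matched index by index.
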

\begin{proof}
It is enough to prove that $Q_T$ is isomorphic to $Q_{T[\pm 1]}$. Suppose
there are $r$ arrows in $Q_T$ from $i$ to $j$ with colour $c$. Let
$T_i$ and $T_j$ be the indecomposable direct summands of $T$
corresponding to vertex $i$ and $j$ in $Q_T$ respectively. Let $\bar{T} = T /
T_i$ be the almost complete $m$-cluster tilting object obtained from
$T$ by removing $T_i$. Then there exist an exchange triangle 
$$T_i^{(c)} \rightarrow B_i^{(c)} \rightarrow T_i^{(c+1)}\rightarrow$$
with $B_i^{(c)}$ in $\add(\bar{T})$. There are $r$ arrows from
$i$ to $j$, with colour $c$, so hence $T_j$ has multiplicity $r$ in
$B_i^{(c)}$. Clearly $T_i[1]$ and $T_j[1]$ are indecomposable
direct summands of $T[1]$ and we have the exchange triangle 
$$T_i^{(c)}[1] \rightarrow B_i^{(c)}[1] \rightarrow
T_i^{(c+1)}[1]\rightarrow.$$  
Since $T_j$ has multiplicity $r$ in $B_i^{(c)}$, $T_j[1]$ has
multiplicity $r$ in $B_i^{(c)}[1]$. It follows that there are $r$
arrows in $Q_{T[1]}$ from $i$ to $j$ with colour $c$. The same proof
holds for $[-1]$, and so hence the claim follows. 
\end{proof}

\section{Finiteness of the number of non-isomorphic $m$-cluster tilted
  algebras} 

In \cite{br} the authors showed that if $Q$ is a finite quiver with no
oriented cycles, then there is only a finite number of quivers in the
mutation class of $Q$ if and only if the underlying graph of $Q$ is
Dynkin, extended Dynkin or has at most two vertices. In these cases
there are only a finite number of non-isomorphic cluster-tilted
algebras of some fixed type. In this section we want to prove an
analogous result for coloured quivers by generalizing the results and
proofs in \cite{br}.     

Let $H = kQ$ be a finite dimensional hereditary algebra. We know that
$H$ is of finite representation type if and only if the underlying
graph of $Q$ is Dynkin. Furthermore, $H$ is tame if and only if the
underlying graph of $Q$ is extended Dynkin. Objects in the module
category of $H$, when $H$ is of infinite type, are either
preprojective, preinjective or regular. In the case when $H$ is tame,
the regular components of the AR-quiver are disjoint tubes of the form
$\mathbb{Z}A_{\infty} / \left <\tau^i\right >$ for some $i$, and in the wild
case they are of the form $\mathbb{Z}A_{\infty}$.

If $X$ is a preprojective or preinjective $H$-module, it is known that
$X$ is rigid, i.e. $\Ext_{H}^1(X,X)=0$. The following is a
well-known result, see for example \cite{r}. 

\begin{lem}\label{kQ1}
Let $H=kQ$ be a finite dimensional hereditary algebra of infinite
representation type, then if $H$ has exactly two simples, no
indecomposable regular object is rigid. 
\end{lem}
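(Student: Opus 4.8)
The plan is to consider a hereditary algebra $H = kQ$ with exactly two simples and infinite representation type, and to show that any indecomposable regular module $X$ has $\Ext_H^1(X,X) \neq 0$. Since $H$ has two simples, $Q$ has two vertices, and infinite type forces the underlying graph to be the Kronecker quiver (extended Dynkin type $\widetilde{A}_1$) or a wild two-vertex quiver with at least two arrows between the vertices. First I would reduce the claim to a dimension count. For a hereditary algebra there is a standard formula relating the Euler form to homomorphisms and extensions, namely
\[
\dim_k \Hom_H(X,X) - \dim_k \Ext_H^1(X,X) = \langle \underline{\dim} X, \underline{\dim} X \rangle,
\]
where $\langle -,- \rangle$ is the Euler (Ringel) form determined by $Q$. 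Because $X$ is indecomposable and $k$ is algebraically closed, $\End_H(X)$ is local, and in the regular case one typically has $\Hom_H(X,X) = k$ (the module is a brick, or at least $\dim_k\Hom_H(X,X) \ge 1$). So the key is to show $\langle \underline{\dim} X, \underline{\dim} X \rangle \le 0$, which then forces $\Ext_H^1(X,X) \ge \dim_k\Hom_H(X,X) > 0$.

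The heart of the argument is therefore the computation of the Euler form on the two-vertex quiver. If the quiver has $t$ arrows between its two vertices, then with $\underline{\dim} X = (a,b)$ one gets $\langle (a,b),(a,b)\rangle = a^2 + b^2 - t\,ab$ (with the sign convention matching the arrow direction). For the Kronecker case $t=2$ this is $(a-b)^2 \ge 0$, vanishing exactly when $a = b$; for wild two-vertex quivers $t \ge 3$ and the form is strictly negative on any nonzero nonnegative vector that is not concentrated at one vertex. The remaining point is to pin down $\underline{\dim} X$ for regular indecomposables: in the tame (Kronecker) case every regular indecomposable has dimension vector lying on the null root, i.e. of the form $(a,a)$ (up to the multiple coming from the tube), so the quadratic form vanishes, giving $\langle \underline{\dim} X,\underline{\dim} X\rangle = 0$ and hence $\Ext_H^1(X,X) = \dim_k\Hom_H(X,X) \ge 1 > 0$. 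In the wild case the form is already negative for any genuinely two-dimensional support, and a regular module cannot be supported at a single vertex (a simple at a vertex is preprojective/preinjective, not regular, in this setting), so again $\Ext_H^1(X,X) > 0$.

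I would organize the write-up as: (1) reduce to two vertices and recall the Euler-form identity for hereditary $H$; (2) record that an indecomposable has local endomorphism ring so $\dim_k\Hom_H(X,X) \ge 1$; (3) compute the quadratic Euler form explicitly as $a^2 + b^2 - t\,ab$ and observe $\langle \underline{\dim} X,\underline{\dim} X\rangle \le 0$ for all relevant regular dimension vectors; (4) combine to conclude $\Ext_H^1(X,X) \ge \dim_k\Hom_H(X,X) > 0$. The step I expect to be the main obstacle is (3) in the tame case: one must justify that regular indecomposables have dimension vector a multiple of the null root $(1,1)$, which is where the specific structure of the Kronecker quiver (its regular modules form tubes indexed by $\mathbb{P}^1$, each concentrated on the null root direction) is genuinely used rather than a formal dimension count. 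Everything else is either the standard homological formula or elementary arithmetic on the quadratic form.
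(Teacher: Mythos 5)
The paper does not actually prove this lemma---it is quoted as a well-known fact with a pointer to Ringel---so your proposal has to stand on its own. The tame half of your argument is fine: for the Kronecker quiver every regular indecomposable has dimension vector $(a,a)$, the Euler form vanishes there, and $\Ext^1_H(X,X)\neq 0$ follows from $\dim_k\Hom_H(X,X)\ge 1$. The gap is in the wild case, at precisely the step you dismissed as elementary arithmetic. For $t\ge 3$ the form $q(a,b)=a^2+b^2-t\,ab$ is \emph{not} strictly negative on all positive vectors with full support: for $t=3$ one has $q(1,3)=1+9-9=1>0$, and more generally $q$ takes the value $1$ on the infinite family of positive real roots $(p_k,p_{k+1})$ generated by $p_{k+1}=t\,p_k-p_{k-1}$. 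So the dimension count alone cannot close the wild case; if a regular indecomposable $X$ were a brick with a real root as dimension vector, the Euler identity would give $\Ext^1_H(X,X)=0$, i.e.\ exactly the situation you need to exclude.

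What is actually needed is the assertion that for a two-vertex wild quiver every indecomposable whose dimension vector is a positive real root is preprojective or preinjective---equivalently, every regular indecomposable has an imaginary root as dimension vector, so that $q(\underline{\dim}\,X)\le 0$ by Kac's theorem and your inequality then does go through. This identification of the positive real roots of the rank-two root system with the dimension vectors of the preprojective and preinjective indecomposables (via the infinite dihedral Weyl group, or via reflection functors and the structure of the preprojective component) is the genuine content of the lemma and is what the citation of Ringel is carrying; it does not follow from sign considerations on $q$ alone. Either prove that step or cite it explicitly; as written, the wild case of your argument rests on a false inequality.
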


In \cite{w} it was shown that if $T$ is an $m$-cluster tilting object
in $\mathcal{C}_{H}^{m}$, then it is induced from a tilting object in 
$\mo H_0 \vee \mo H_0 [1] \vee ... \vee \mo H_0 [m-1]$,
where $H_0$ is derived equivalent to $H$. If $H$ is of finite or tame
representation type, it was shown in \cite{br}
that for each indecomposable projective $H$-module $P$, there are only
a finite number of indecomposable objects $X$ such that
$\Ext_{\mathcal{C}_H^1}^1(X,P)$. 

\begin{lem}\label{projectiveshift}
Let $P[i]$ be a shift of an indecomposable projective $H$-module,
where $H$ is of finite or tame representation type. Then there is only
a finite set of objects $X$ in $\mathcal{C}_{H}^{m}$ with
$\Ext^{k}_{\mathcal{C}_{H}^{m}}(X,P[i]) = 0$ for all $k \in
\{1,2,...,m\}$.  
\end{lem}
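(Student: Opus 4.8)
The plan is to collapse the many vanishing conditions in $\mathcal{C}_H^m$ down to a single $\Ext^1$-vanishing condition in $\mo H$ and then quote the finite/tame result recalled just above. The finite-representation-type case is immediate, since then $\mathcal{C}_H^m$ has only finitely many indecomposable objects; so I may assume $H$ is tame, and I work throughout with indecomposable $X$ (an arbitrary object has the stated property if and only if each of its indecomposable summands does). Since $[1]$ is an autoequivalence of $\mathcal{C}_H^m$ and $\Ext^k_{\mathcal{C}_H^m}(X,P[i])\cong\Ext^k_{\mathcal{C}_H^m}(X[-i],P)$, it suffices to treat $i=0$: once the set of indecomposable $X$ with $\Ext^k_{\mathcal{C}_H^m}(X,P)=0$ for all $k\in\{1,\dots,m\}$ is known to be finite, applying $[i]$ gives the assertion for $P[i]$.

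First I would rewrite the orbit-category $\Ext$-groups in derived terms. Using $\mathcal{C}_H^m=\mathcal{D}^b(H)/\langle\tau^{-1}[m]\rangle$ one has, for indecomposables,
$$\Ext^k_{\mathcal{C}_H^m}(U,V)=\bigoplus_{\ell\in\mathbb{Z}}\Hom_{\mathcal{D}^b(H)}\bigl(U,\ \tau^{-\ell}V[m\ell+k]\bigr).$$
I would then write an arbitrary indecomposable of $\mathcal{C}_H^m$ in fundamental-domain form: either $X=X'[a]$ with $X'$ an indecomposable $H$-module and $0\le a\le m-1$, or $X=Q[m]$ with $Q$ projective. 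For the first type,
$$\Ext^k_{\mathcal{C}_H^m}(X'[a],P)=\bigoplus_{\ell\in\mathbb{Z}}\Hom_{\mathcal{D}^b(H)}\bigl(X',\ \tau^{-\ell}P[m\ell+k-a]\bigr).$$
Because $H$ is hereditary, a derived $\Hom$ between module-shifts vanishes outside internal degrees $0$ and $1$. Combined with the boundary behaviour of $\tau$ on the projective, namely $\tau P\cong I_P[-1]$, a short degree count on the internal shift $m\ell+k-a$ shows that for $k\in\{1,\dots,m\}$ and $0\le a\le m-1$ only the term $\ell=0$ can survive, contributing $\Hom_H(X',P)$ when $k=a$ and $\Ext^1_H(X',P)$ when $k=a+1$.

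The decisive point is then that $\Ext^{a+1}_{\mathcal{C}_H^m}(X'[a],P)=\Ext^1_H(X',P)$, with $a+1\in\{1,\dots,m\}$ for every admissible $a$. Hence if all $\Ext^k_{\mathcal{C}_H^m}(X'[a],P)$ vanish, then in particular $\Ext^1_H(X',P)=0$. By the result recalled from \cite{br} — using $\Ext^1_{\mathcal{C}_H^1}(X',P)=\Ext^1_H(X',P)$ for a projective $P$ — only finitely many indecomposable modules $X'$ satisfy $\Ext^1_H(X',P)=0$ when $H$ is tame. Multiplying these finitely many $X'$ by the $m$ possible internal degrees $a$, and adjoining the finitely many objects $Q[m]$ (there being only $n$ indecomposable projectives $Q$), yields a finite list of indecomposable $X$, as required.

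The step I expect to be the main obstacle is the derived-category bookkeeping in the second paragraph: verifying that every term with $\ell\neq 0$ genuinely vanishes for a projective target. This hinges on tracking $\tau^{-\ell}P$ through $\mathcal{D}^b(H)$ — it stays preprojective in degree $0$ for $\ell\ge 0$, while $\tau P\cong I_P[-1]$ drops the degree for $\ell<0$ — so that the shifts $m\ell+k-a$ are pushed outside the window $\{0,1\}$ in all cases except $\ell=0$. Once this degree count is pinned down, the reduction of the full family of conditions $\Ext^k_{\mathcal{C}_H^m}(X,P[i])=0$ to the single module-level condition $\Ext^1_H(X',P)=0$, and the appeal to \cite{br}, is routine.
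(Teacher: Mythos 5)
Your proof is correct and follows essentially the same route as the paper: reduce to the case $i=0$ via the shift equivalence, observe that for an indecomposable $X'[a]$ in the $a$-th shifted copy of $\mo H$ the single condition $\Ext^{a+1}_{\mathcal{C}_H^m}(X'[a],P)=0$ already reduces to the module-level condition $\Ext^1_H(X',P)=0$, and then invoke the finiteness result of \cite{br}. The only difference is that you carry out explicitly the orbit-category degree count (showing only the $\ell=0$ term survives) that the paper states as the identity $\Ext^{j+1}_{\mathcal{C}_H^m}(X,P)=\Ext^{1}_{\mathcal{C}_H^m}(X,P[j])$ together with the quoted fact from \cite{br}.
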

\begin{proof}
We can assume that an $m$-cluster
tilting object is induced from a tilting object in $\mo H \vee \mo H
[1] \vee ... \vee \mo H [m-1]$. 

It is enough to show that there are a finite number of indecomposable
objects $X$ such that $\Ext^{1}_{\mathcal{C}_H^m}(X,P)=0$, where $P$ is
a projective $H$-module, since the shift functor is an equivalence on
the $m$-cluster category. It follows from \cite{br} that there are
only a finite number of indecomposable objects $X$ lying inside $\mo
H[i]$, with $\Ext^{1}_{\mathcal{C}_H^m}(X,P[i]) = 0$ for all $i$.  

We have $\Ext^{j+1}_{\mathcal{C}_H^m}(X,P) =
\Ext^{1}_{\mathcal{C}_H^m}(X,P[j])$, so there are only finitely many
indecomposable objects $X$ in $\mo H[j]$ such that
$\Ext^{j+1}_{\mathcal{C}_H^m}(X,P) = 0$. Consequently there are only a
finite number of indecomposable objects $X$ such that
$\Ext^{k}_{\mathcal{C}_H^m}(X,P) = 0$ for all $k \in \{1,2,...,m\}$,
and we are finished.    
\end{proof}

It is known from \cite{bkl} that in the tame case, a collection of one
or more tubes is triangulated. We give the proof of the following for
the convenience of the reader.   

\begin{prop}\label{thm_regular}
Let $H$ be a finite dimensional tame hereditary algebra over a
field $k$, and $\mathcal{C}_H^{m}$ the corresponding $m$-cluster
category. Let $$X \rightarrow Y \rightarrow Z \rightarrow$$ be a
triangle in $\mathcal{C}_H^m$, where two of the terms are shifts of
regular modules. Then all terms are shifts of regular modules.   
\end{prop}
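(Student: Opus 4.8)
The plan is to show that the shifts of regular modules constitute a triangulated subcategory of $\mathcal{C}_H^m$, so that the conclusion reduces to the usual ``two out of three'' property of triangles. First I would normalise the triangle $X \to Y \to Z \to X[1]$: after rotating it if necessary, I may assume that the two regular terms are the first two, so that $Z$ is isomorphic to the cone of a morphism $X \to Y$ between shifts of regular modules. Since $[1]$ is a triangle auto-equivalence of $\mathcal{C}_H^m$ carrying shifts of regular modules to shifts of regular modules (recall $\tau = [m]$ permutes the tubes, and $\tau^{-1}S$ is regular whenever $S$ is), it then suffices to prove that the cone of any morphism $R[a] \to S[b]$, with $R$ and $S$ regular, is again a shift of a regular module.

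Next I would analyse such a morphism by lifting it to $\mathcal{D}^b(H)$. Writing $\mathcal{C}_H^m$ as the orbit category $\mathcal{D}^b(H)/\tau^{-1}[m]$, a morphism $R[a] \to S[b]$ is a finite sum of components lying in $\Ext^{mk+b-a}_H(R,\tau^{-k}S)$ for $k \in \mathbb{Z}$; as $H$ is hereditary these vanish unless $mk+b-a \in \{0,1\}$, and each $\tau^{-k}S$ is again regular. Thus every component is either a homomorphism of regular modules or an element of an $\Ext^1$ between regular modules. Since the regular modules form an exact abelian subcategory of $\mo H$, closed under kernels, cokernels and extensions, a component that is a module map $f$ has cone $\ke f[1] \oplus \coker f$ in $\mathcal{D}^b(H)$, while a component that is an extension class has cone $E[1]$, with $E$ the middle term of the corresponding short exact sequence; in both cases we land among shifts of regular modules.

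The point that genuinely needs the input of \cite{bkl} is that a single morphism in $\mathcal{C}_H^m$ may mix a homomorphism part and an extension part (already when $m=1$, where $\Hom_{\mathcal{C}_H^1}(R,S) = \Hom_H(R,S) \oplus \Ext^1_H(R,\tau^{-1}S)$), so the naive term-by-term cone computation does not apply directly. This is exactly the obstacle resolved by the recalled fact that a collection of tubes is triangulated: the full subcategory $\mathcal{R} \subseteq \mathcal{C}_H^m$ of shifts of regular modules is closed under the shift functor and under cones, hence is a triangulated subcategory. Granting this, the cone $Z$ of $X \to Y$ lies in $\mathcal{R}$, which is precisely the assertion that $Z$ is a shift of a regular module. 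The hard part is therefore the verification that $\mathcal{R}$ is closed under cones; once \cite{bkl} supplies this for the regular part, the rotation argument disposes of the general triangle and the proposition follows.
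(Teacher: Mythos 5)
Your proposal does not actually prove the proposition: it reduces it to the assertion that the shifts of regular modules form a triangulated subcategory of $\mathcal{C}_H^m$ (equivalently, that this class is closed under cones), and then, at the step you yourself identify as ``the hard part,'' defers to \cite{bkl}. But that assertion \emph{is} the content of the proposition --- the paper cites \cite{bkl} precisely for the statement that a collection of tubes is triangulated and then supplies its own proof ``for the convenience of the reader,'' not least because \cite{bkl} treats the ordinary cluster category ($m=1$) rather than $\mathcal{C}_H^m$. So the argument is circular as written. The partial analysis you give does not close the gap: computing the cone of each \emph{pure} component (a module map, or a single extension class) is fine for a hereditary algebra, but a morphism between decomposable objects $\oplus_i R_i[a_i] \to \oplus_j S_j[b_j]$ in the orbit category is a matrix whose entries lift to different $\Ext$-degrees and to different powers of $F=\tau^{-1}[m]$, and the cone of a matrix morphism is not the direct sum of the cones of its entries. (For $m=1$ the mixing already occurs between indecomposables, as you note.) Handling this requires either a genuine lifting argument to $\mathcal{D}^b(H)$ together with the long exact cohomology sequence and closure of the regular category under kernels, cokernels and extensions, or a different idea altogether; neither is carried out.

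For contrast, the paper's proof sidesteps cones entirely. Rotating so that the two regular terms are the outer terms $X$ and $Z$, it chooses a homogeneous tube $\mathcal{T}$ containing no summand of $X$ or $Z$ (possible since there are infinitely many homogeneous tubes) and a quasi-simple $W \in \mathcal{T}$, which is sincere and satisfies $\tau W = W$. Applying $\Hom(-,W)$ to the triangle gives the exact sequence $\Hom(Z,W) \to \Hom(Y,W) \to \Hom(X,W)$ whose outer terms vanish because there are no maps between distinct tubes, so $\Hom(Y,W)=0$; sincerity of $W$ together with $\tau W = W$ forces $\Hom(U,W) \neq 0$ for every preprojective $U$ (and dually for preinjectives), so $Y$ can have no preprojective or preinjective summands. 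This is a short, self-contained detection argument, and it is the kind of step your outline still needs in order to become a proof.
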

\begin{proof}
It is enough to show that if $X$ and $Z$ are shifts of regular modules, then
$Y$ is a shift of a regular module. There exist a homogeneous tube
$\mathcal{T}$, i.e. $\tau M = 
M$ for all $M \in \mathcal{T}$, such that no
direct summands of $X$ or $Z$ are in $\mathcal{T}$. Let $W$ be a
quasi-simple object in $\mathcal{T}$. We have that $W$ is sincere (see
\cite{dr}). We get the exact sequence $$\Hom(Z,W) \rightarrow
\Hom(Y,W) \rightarrow \Hom(X,W).$$ We have that $\Hom(Z,W) = \Hom(X,W)
= 0$, since there are no maps between disjoint tubes. It follows that
$\Hom(Y,W) = 0$. Since $W$ is sincere, we have that $\Hom(U,W) \neq 0$
for any projective $U$, hence for any preprojective since $\tau W =
W$. We can do similarly for preinjectives. It follows that all direct
summands of $Y$ are shifts of regulars.
\end{proof}

\begin{prop}\label{thm_lem}
Let $\mathcal{C}_{H}^{m}$ be an $m$-cluster category, where $H$ is of
tame representation type. Let $T$ be an $m$-cluster tilting object in
$\mathcal{C}_{H}^{m}$. Then $T$ has, up to $\tau$, at least one direct
summand which is a shift of a projective or injective.
\end{prop}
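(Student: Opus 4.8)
The plan is to argue by contradiction, establishing that the property ``every indecomposable summand is a shift of a regular module'' is preserved under mutation, and then to invoke Proposition~\ref{tiltreached}. First I would unwind the statement. Since the Auslander--Reiten translation $\tau = [m]$ of $\mathcal{C}_H^m$ coincides, inside $\mathcal{C}_H^m$, with the Auslander--Reiten translation of $\mo H$ (this is exactly the relation $\tau^{-1}[m] \cong \mathrm{id}$ defining the orbit category), a preprojective module $\tau^{-k}P$ is isomorphic in $\mathcal{C}_H^m$ to the shift $P[-km]$, and dually every preinjective is a shift of an injective. Hence an indecomposable object of $\mathcal{C}_H^m$ is, up to $\tau$, a shift of a projective or injective precisely when it is a shift of a preprojective or preinjective $H$-module, that is, precisely when it is \emph{not} a shift of a regular module. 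So it suffices to prove that $T$ has at least one indecomposable summand which is not a shift of a regular module.

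Suppose to the contrary that every indecomposable summand of $T$ is a shift of a regular module; call this property $(*)$, understanding a ``shift of a regular module'' to allow direct sums, as in Proposition~\ref{thm_regular}. I claim $(*)$ is preserved by a single mutation. Let $T' = T/T_j \oplus T_j'$ be obtained from $T$ by exchanging the summand $T_j$ for an adjacent complement $T_j'$, so that $T_j$ and $T_j'$ are joined by an exchange triangle with middle term $B \in \add(T/T_j)$, of the shape $T_j \rightarrow B \rightarrow T_j' \rightarrow$ or $T_j' \rightarrow B \rightarrow T_j \rightarrow$. Since $T$ satisfies $(*)$, both $T_j$ and $B$ are shifts of regular modules. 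Rotating the triangle if necessary so that these two occupy the outer positions, Proposition~\ref{thm_regular} forces the remaining term $T_j'$ to be a shift of a regular module as well; in the degenerate case $B = 0$ the triangle collapses and $T_j' \cong T_j[\pm 1]$ is again a shift of a regular. Thus $T'$ satisfies $(*)$.

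By induction on the number of mutation steps, every $m$-cluster tilting object obtained from $T$ by iterated mutation satisfies $(*)$; by Proposition~\ref{tiltreached} this means that \emph{every} $m$-cluster tilting object in $\mathcal{C}_H^m$ satisfies $(*)$. This is absurd: the tilting $H$-module $H = \bigoplus_{i} P_i$ induces an $m$-cluster tilting object by \cite{w}, and its indecomposable summands are the projectives $P_1,\dots,P_n$, which are preprojective and hence not shifts of regular modules. This contradiction proves the proposition.

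The main obstacle I anticipate is the bookkeeping in the middle step: one must confirm that the two terms known to be shifts of regulars can always be rotated into the outer positions of the exchange triangle so that Proposition~\ref{thm_regular} applies, and that the degenerate case $B=0$ and the possibility of mutating ``in either direction'' are covered. The opening reduction is equally delicate, since the whole argument hinges on reading ``a shift of a projective or injective, up to $\tau$'' as ``not a shift of a regular,'' which in turn rests on the identification of the two Auslander--Reiten translations via $\tau = [m]$. Once these points are settled, the argument is short.
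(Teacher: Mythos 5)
Your proposal is correct and follows essentially the same route as the paper: assume all summands are shifts of regulars, use the exchange triangles together with Proposition~\ref{thm_regular} to show this property is preserved under mutation, and then contradict Proposition~\ref{tiltreached} via the $m$-cluster tilting object induced from the tilting module $H$ itself. Your explicit justification of the reduction ``up to $\tau$, a shift of a projective or injective $\Leftrightarrow$ not a shift of a regular'' is a welcome elaboration of a step the paper dismisses as clear, but it does not change the substance of the argument.
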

\begin{proof}
It is clearly enough to prove that there are no $m$-cluster tilting
objects in $\mathcal{C}_{H}^{m}$ with only shifts of regular
$H$-modules as direct summands. So suppose, for a contradiction, that
such a $T$ exists. 

We can decompose $T$ into indecomposable summands, where $T=T_1 \oplus
T_2 \oplus ... \oplus T_n$ and $n$ is the number of simple
$H$-modules. If all direct summands are of the same degree, we already
have a contradiction, since a tilting module has at least one direct
summand which is preprojective or preinjective (see \cite{r}).

Assume that $T_n$ is a direct summand of degree $k \leq m$. Let
$\bar{T} = T_1 \oplus T_2 \oplus ... \oplus T_{n-1}$ be the almost
complete $m$-cluster tilting object obtained from $T$ by removing the
direct summand $T_n$. Then we know that the complements of $\bar{T}$
are connected by $m+1$ AR-triangles,

$$M_{i+1} \rightarrow X_i \rightarrow M_i \rightarrow,$$
where $i \in \{0,1,2,...,m \}$ and $X_i \in \add \bar{T}$. 

The direct summands of $X_i$ are by assumption shifts of regular
modules. We also have that $T_n$ is a shift of a regular module and
that it is equal to $M_j$ for some $j$, since it is a complement of
$\bar{T}$. It follows that $M_i$ is a shift of a regular module for
all $i$ by Proposition \ref{thm_regular}, since these are connected by the
exhange triangles. So all $m$-cluster tilting objects that can be
reached from $T$ by a finite number of mutations, have only regular
direct summands.  

This leads to a contradiction, because we know from Proposition
\ref{tiltreached} that all $m$-cluster tilting objects can be reached
from $T$ by a finite number of 
mutations, and a tilting module in $H$ induces an $m$-cluster tilting
object in $\mathcal{C}_{H}^{m}$ with at least one direct summand
preprojective or preinjective.
\end{proof}

From this it follows that we can assume that an $m$-cluster tilting object
has at least one direct summand which is a shift of a projective up to
$\tau$. 

We also need a lemma proven in \cite{br}.

\begin{lem}\label{brlemma1}
Let $H$ be wild with at least $3$ non-isomorphic simples. Let $t$ be
a positive integer. Then there is a tilting module $T$ in $H$ with
indecomposable direct summands $T_1$ and $T_2$, such that $\Dim
\Hom_{H} (T_1, T_2) \geq t$. 
\end{lem}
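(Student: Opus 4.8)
The plan is to reduce the statement to the existence of a suitable rigid module and then to invoke the growth of homomorphism spaces forced by wildness. Since $H$ is hereditary every module has projective dimension at most one, so any module $M$ with $\Ext_H^1(M,M)=0$ is a partial tilting module and, by Bongartz's completion lemma (see \cite{r}), is a direct summand of a tilting module $T$. Hence it suffices to produce, for each positive integer $t$, two non-isomorphic indecomposable modules $T_1,T_2$ with $\Ext_H^1(T_1\oplus T_2,T_1\oplus T_2)=0$ and $\dim\Hom_H(T_1,T_2)\geq t$; completing $T_1\oplus T_2$ to a tilting module then gives the desired $T$.

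To obtain large Hom-spaces I would work inside the preprojective component, whose indecomposables are directed and rigid. Because the underlying graph of $Q$ is neither Dynkin nor extended Dynkin, the Tits form of $H$ is indefinite and the Coxeter transformation $\Phi$ has spectral radius $\lambda>1$; consequently the dimension vectors $\underline{\dim}\,\tau^{-k}X=\Phi^{-k}\,\underline{\dim}\,X$ grow like $\lambda^{k}$ in every coordinate, and for indecomposable preprojective $X,Y$ with $\Hom_H(X,Y)\neq 0$ the dimensions $\dim\Hom_H(X,\tau^{-k}Y)$ are unbounded in $k$ (see \cite{r}). This is exactly where wildness is used; the two-simple case is excluded because a two-vertex quiver has a finite mutation class irrespective of such growth, so the lemma is only needed, and only applied, when $H$ has at least three simples.

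The main difficulty is to arrange rigidity and large $\Hom$ \emph{simultaneously}: for preprojective $X,Y$ the Auslander--Reiten formula gives $\Ext_H^1(Y,X)\cong D\Hom_H(X,\tau Y)$, so rigidity of $X\oplus Y$ forces $\Hom_H(X,\tau Y)=0$ although we want $\Hom_H(X,Y)$ to be large. This is not a genuine contradiction. Applying $\Hom_H(X,-)$ to the Auslander--Reiten sequence $0\to\tau Y\to E\to Y\to 0$, and using $\Hom_H(X,\tau Y)=0$ together with $\Ext_H^1(X,\tau Y)\cong D\Hom_H(Y,X)=0$ for a directed pair, yields $\Hom_H(X,E)\cong\Hom_H(X,Y)$; thus every map $X\to Y$ factors through $E$, and large $\Hom_H(X,Y)$ is compatible with rigidity. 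The obstacle is therefore reduced to locating, among the infinitely many exceptional modules produced by wildness, a rigid pair whose Euler pairing $\langle\underline{\dim}\,X,\underline{\dim}\,Y\rangle=\dim\Hom_H(X,Y)$ is at least $t$; the unbounded growth of the preceding paragraph, realized by pushing one summand of an exceptional pair through $\tau^{-}$, guarantees such a pair exists. Bongartz completion then upgrades $X\oplus Y$ to a tilting module $T$ with $T_1=X$ and $T_2=Y$, as claimed. I expect the technical heart, and the step most in need of care, to be the explicit choice of the rigid pair that makes both the vanishing $\Hom_H(X,\tau Y)=0$ and the lower bound $\dim\Hom_H(X,Y)\geq t$ hold at once.
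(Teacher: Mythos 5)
The paper gives no proof of this lemma at all --- it is quoted verbatim from \cite{br} --- so the only comparison available is with the argument there, and the decisive difference is that your proposal never uses the hypothesis that $H$ has at least three simples, whereas that hypothesis is where all the content lies. You have in fact located the gap yourself in your closing sentence: everything you carry out (Bongartz completion, the Auslander--Reiten formula, the factorization through the middle term of the AR-sequence) is routine, and the one step that \emph{is} the lemma --- exhibiting a rigid pair $X\oplus Y$ with $\dim\Hom_H(X,Y)\ge t$ --- is asserted, not constructed. The ``unbounded growth'' of $\dim\Hom_H(X,\tau^{-k}Y)$ does not produce such a pair, because the same growth makes $\Ext^1_H(\tau^{-k}Y,X)\cong D\Hom_H(X,\tau^{-k+1}Y)$ nonzero for all large $k$, so rigidity is destroyed at essentially the same moment the Hom-space becomes large.

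Two concrete facts show the gap cannot be closed along the route you chose. First, the statement is false for wild $H$ with two simples: for the generalized Kronecker algebra with $r\ge 3$ arrows, no regular indecomposable is rigid (Lemma \ref{kQ1}), so every tilting module is a pair of adjacent preprojectives or adjacent preinjectives, and $\dim\Hom_H(T_1,T_2)=r$ for every such pair. Since every ingredient you invoke (directedness and rigidity of preprojectives, spectral radius of the Coxeter matrix exceeding $1$, growth of $\Hom(X,\tau^{-k}Y)$, the consistency check via the AR-sequence) holds verbatim there, no argument built only from these ingredients can prove the lemma. Second, and independently of the number of simples, the preprojective component can never supply the required pairs: writing $X=\tau^{-a}P_i$ and $Y=\tau^{-b}P_j$ with $b\ge a$, the modules $\tau^{-s}P_j$ are sincere for all $s\ge N$, hence $\Hom_H(P_i,\tau^{-s}P_j)\neq 0$ for $s\ge N$, hence $\Ext^1_H(Y,X)\cong D\Hom_H(P_i,\tau^{-(b-a-1)}P_j)\neq 0$ once $b-a-1\ge N$; rigidity therefore forces $b-a\le N$, and then $\dim\Hom_H(X,Y)=\dim\Hom_H(P_i,\tau^{-(b-a)}P_j)$ ranges over a finite set. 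So the tension you flag as ``a step in need of care'' is, inside the preprojective component, an actual obstruction. A correct proof must pass to regular exceptional modules --- the existence of regular exceptional, indeed regular tilting, modules for connected wild $H$ with at least three simples is Ringel's theorem in \cite{r}, and it is exactly the point where the hypothesis $n\ge 3$ has teeth; your remark that the two-simple case is merely ``where the lemma is not needed'' conflates the applicability of the lemma with its truth.
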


To prove the next lemma, which was observed in \cite{br} for
$1$-cluster tilted algebras, we use the following fact from
\cite{w}. Let $F = \tau^{-1}[m]$. If $X$ and $Y$ are two objects in
some chosen fundamental domain in $\mathcal{D}^b(H)$, then
$\Hom_{\mathcal{D}^b(H)}(X,F^i Y) = 0$ for all $i \neq 0,1$. 

\begin{lem}\label{brlemma2}
If a path in the quiver of an $m$-cluster tilted algebra goes through two
oriented cycles, then it is zero.
\end{lem}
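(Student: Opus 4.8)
The plan is to exploit the $F$-degree decomposition of the endomorphism algebra, where $F=\tau^{-1}[m]$. Write $\Lambda=\End_{\mathcal{C}_H^m}(T)$ and choose the indecomposable summands $T_1,\dots,T_n$ inside a fixed fundamental domain of $\mathcal{D}^b(H)$ for $F$. Since $\mathcal{C}_H^m=\mathcal{D}^b(H)/F$, we have $\Hom_{\mathcal{C}_H^m}(T_i,T_j)=\bigoplus_{s\in\mathbb{Z}}\Hom_{\mathcal{D}^b(H)}(T_i,F^sT_j)$, and the fact from \cite{w} quoted above forces every summand with $s\neq 0,1$ to vanish. Thus $\Lambda=\Lambda_0\oplus\Lambda_1$, where $\Lambda_s$ collects the morphisms whose only nonzero component sits in $F$-degree $s$. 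First I would check that this is compatible with composition: composing a degree-$a$ morphism $f$ with a degree-$b$ morphism $g$ via $F^a(g)\circ f$ lands in degree $a+b$, so $\Lambda_a\Lambda_b\subseteq\Lambda_{a+b}$ and everything of degree $\geq 2$ vanishes. In particular $\Lambda_1$ is a two-sided ideal with $\Lambda_1^2=0$.

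The second step is to show that the composite along any oriented cycle of the Gabriel quiver lands in $\Lambda_1$. Let $c\in\Hom_{\mathcal{C}_H^m}(T_v,T_v)$ be the element obtained by composing the arrows of an oriented cycle based at a vertex $v$. If $c=0$ there is nothing to prove, so assume $c\neq 0$; being a product of arrows, $c$ lies in $\Rad^\ell\Lambda$ for the length $\ell\geq 1$ of the cycle, hence in $\Rad\Lambda$, and is therefore non-invertible. Its degree-$0$ component lies in $\Hom_{\mathcal{D}^b(H)}(T_v,T_v)$, which equals $k$ because $T_v$ is an exceptional (indecomposable rigid) object and hence a brick. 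Writing $c=\lambda\,\mathrm{id}_{T_v}+c_1$ with $\lambda\in k$ and $c_1\in\Lambda_1$, the square-zero property makes $\lambda\,\mathrm{id}_{T_v}+c_1$ invertible whenever $\lambda\neq 0$ (with inverse $\lambda^{-1}\mathrm{id}_{T_v}-\lambda^{-2}c_1$); since $c$ is non-invertible we must have $\lambda=0$, so $c=c_1\in\Lambda_1$. Note that this argument never uses the $F$-degree of the individual arrows, only that a cyclic composite is a radical endomorphism.

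Finally I would conclude. A path $p$ that goes through two oriented cycles can be written as a composition $p=u_3\,c_2\,u_2\,c_1\,u_1$ in which $c_1$ and $c_2$ are the composites along the two cycles and $u_1,u_2,u_3$ are the connecting subpaths. By the previous step $c_1,c_2\in\Lambda_1$, and since $\Lambda_1$ is an ideal we get $c_2\,u_2\,c_1\in\Lambda_1\,\Lambda\,\Lambda_1\subseteq\Lambda_1^2=0$; hence $p=0$ in $\Lambda$.

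The main obstacle is the second step: one must be certain that every cyclic composite really has vanishing degree-$0$ part. This rests on two points that deserve care — that the summands $T_v$ can be taken in a common fundamental domain so that $\Hom_{\mathcal{D}^b(H)}(T_v,T_v)=k$, and that the decomposition $\Lambda=\Lambda_0\oplus\Lambda_1$ is genuinely multiplicative so that $\Lambda_1$ is a square-zero ideal. Once these are in place, the vanishing of paths through two cycles is immediate.
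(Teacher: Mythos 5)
Your proof is correct and follows essentially the same route as the paper's: the $F$-degree decomposition of the $\Hom$-spaces with only degrees $0$ and $1$ surviving, the fact that the indecomposable summands are exceptional so that $\End_{\mathcal{D}^b(H)}(T_v)=k$, and the observation that traversing two oriented cycles forces total $F$-degree at least $2$, which vanishes. Your write-up is in fact more careful than the paper's sketch, which asserts directly that some map in each cycle lifts to a map of the form $X\to FY$, whereas you justify this by showing the degree-$0$ component of the whole cyclic composite must vanish since that composite is a non-invertible element of $k\cdot\mathrm{id}_{T_v}\oplus\Lambda_1$ with $\Lambda_1$ square-zero.
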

\begin{proof}
We have that $$\Hom_{\mathcal{C}_H}(X,Y)= \oplus_{i\in
  \mathbb{Z}}\Hom_{\mathcal{D}^b(H)}(X,F^i Y).$$ Let $X$ and $Y$ be
two indecomposable $m$-rigid objects in a chosen fundamental
domain. It is well known that since $\Ext_{\mathcal{D}^b(H)}(X,X)=0$,
we have that $\End_{\mathcal{D}^b(H)}(X) = k$. It follows that in an
oriented cycle, one of the maps lifts to a map of the form $X
\rightarrow F Y$ in $\mathcal{D}^b(H)$. If there is a path that goes
through two oriented cycles, we have a map of the form $X \rightarrow
FY \rightarrow F^2 Z$, and this is $0$ by the above. 
\end{proof}

The following theorem generalizes the main theorem in \cite{br}.

\begin{thm}\label{main1}
Let $k$ be an algebraically closed field and $Q$ a connected finite
quiver without oriented cycles. The following are equivalent for $H =
kQ$.
\begin{enumerate}
\item There are only a finite number of basic $m$-cluster tilted algebras
  associated with $H$, up to isomorphism. 
\item There are only a finite number of Gabriel quivers occurring for
  $m$-cluster tilted algebras associated with $H$, up to isomorphism.
\item $H$ is of finite or tame representation type, or has at most two
  non-isomorphic simple modules.
\item There are only a finite number of $\tau$-orbits of cluster
  tilting objects associated with $H$.
\item There are only a finite number of coloured quivers occurring for
  $m$-cluster tilting objects associated with $H$, up to isomorphism.
\item The mutation class of a coloured quiver $Q$, arising from an
  $m$-cluster tilting object associated with $H$, is finite. 
\end{enumerate}
\end{thm}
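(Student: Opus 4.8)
The plan is to prove the six conditions equivalent by establishing a cycle of implications, using the hereditary algebra classification (Dynkin/extended Dynkin/wild, or $\leq 2$ simples) as the central pivot. The natural strategy is to show that the ``finite representation-theoretic data'' conditions (1), (2), (4), (5), (6) all hold precisely when (3) holds, so I would aim to prove (3) $\Rightarrow$ (4) $\Rightarrow$ (5) $\Rightarrow$ (6) $\Rightarrow$ (2) $\Rightarrow$ (1) and then close the loop with a negation argument (1) or (2) $\Rightarrow$ (3), i.e. showing that in the wild case with at least three simples one of the finiteness statements must fail.

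For the forward direction (3) $\Rightarrow$ (4), I would split into cases. In the finite-representation-type (Dynkin) case the whole $m$-cluster category has only finitely many indecomposables, so there are finitely many $m$-cluster tilting objects and a fortiori finitely many $\tau$-orbits. The tame case is the substantive one: here I would invoke Proposition~\ref{thm_lem} to guarantee that every $m$-cluster tilting object has, up to $\tau$, a direct summand which is a shift of a projective or injective, and then Lemma~\ref{projectiveshift} to conclude that only finitely many indecomposables $X$ can be $\Ext$-orthogonal to a fixed shift $P[i]$ of a projective. Fixing a $\tau$-orbit representative with such a projective-shift summand, the remaining $n-1$ summands must all lie in a finite set, so there are only finitely many $\tau$-orbits. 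The case of at most two simples I would handle separately using Lemma~\ref{kQ1}: with only two simples no indecomposable regular is rigid, which sharply restricts which indecomposables can appear as summands of a rigid (hence $m$-rigid) object, again forcing finiteness. From (4) to (5) and (6) I would use Proposition~\ref{colouredquiveriso}: since $Q_T \cong Q_{T[i]}$ and $\tau = [m]$, the coloured quiver depends only on the $\tau$-orbit of $T$, so finitely many $\tau$-orbits yields finitely many coloured quivers; and by Lemma~\ref{colouredquivertilting} together with Proposition~\ref{tiltreached} the coloured quivers occurring are exactly those in the mutation class, giving (6). The implications (5) $\Rightarrow$ (2) $\Rightarrow$ (1) are comparatively soft: the Gabriel quiver is obtained from the coloured quiver by deleting all arrows of nonzero colour, so finitely many coloured quivers forces finitely many Gabriel quivers, and Lemma~\ref{brlemma2} (paths through two oriented cycles vanish) bounds the relations, so that each Gabriel quiver supports only finitely many isomorphism classes of $m$-cluster tilted algebras.

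The main obstacle, and the heart of the argument, will be the contrapositive direction: assuming $H$ is wild with at least three non-isomorphic simples, I must exhibit infinitely many non-isomorphic coloured quivers (or Gabriel quivers, or $m$-cluster tilted algebras). Here I would lean on Lemma~\ref{brlemma1}, which produces tilting modules with summands $T_1, T_2$ satisfying $\Dim \Hom_H(T_1, T_2) \geq t$ for arbitrarily large $t$. The plan is to translate these large Hom-dimensions into arbitrarily many arrows between two vertices in the Gabriel quiver of the corresponding $m$-cluster tilted algebra: since the number of arrows from $i$ to $j$ of colour $0$ records a multiplicity governed by such Hom/$\Ext$ spaces, unboundedly large $\Dim \Hom$ yields coloured quivers with unboundedly many colour-$0$ arrows, hence infinitely many non-isomorphic coloured quivers and Gabriel quivers. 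This simultaneously breaks (2), (5), and (6), and since distinct Gabriel quivers give non-isomorphic algebras, it breaks (1) as well, completing the cycle.

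I would organize the writeup as one chain of implications with the case analysis for (3) $\Rightarrow$ (4) as the longest step and the wild-case construction as a clearly separated final step. The trickiest technical point to get right will be the precise passage from ``$\Dim \Hom_H(T_1,T_2)$ large'' in the module category to ``many arrows in the coloured quiver'' in $\mathcal{C}_H^m$; I expect this requires care about how the induced $m$-cluster tilting object's exchange triangles encode the relevant multiplicities, and about ensuring the constructed objects genuinely lie in the mutation class associated with $H$. Everything else reduces to assembling the lemmas already proved, so the novelty and the risk both concentrate in that single quantitative estimate.
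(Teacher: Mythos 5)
Your overall architecture and almost every individual step coincide with the paper's proof: the case analysis for (3) $\Rightarrow$ (4) (finite type trivially; tame via Proposition \ref{thm_lem} and Lemma \ref{projectiveshift}; two simples via Lemma \ref{kQ1} and the $m+1$ complements of a shifted projective), the passage (4) $\Rightarrow$ (5) $\Rightarrow$ (6) $\Rightarrow$ (2) via Proposition \ref{colouredquiveriso} and the mutation correspondence, and the wild-case argument, which the paper runs as (2) $\Rightarrow$ (3) (finitely many quivers gives a bound $u$ on arrows, Lemma \ref{brlemma2} then bounds $\dim\Hom_{\mathcal{C}_H^m}(T_1,T_2)$ by $u^{2n}$, and Lemma \ref{brlemma1} gives the contradiction) and you run as its contrapositive with the same ingredients. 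The quantitative point you flag as the main risk is handled in the paper exactly by combining Lemmas \ref{brlemma1} and \ref{brlemma2} in this way, so that part of your plan is sound.

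The one genuine gap is your route into condition (1). You propose (2) $\Rightarrow$ (1) on the grounds that Lemma \ref{brlemma2} ``bounds the relations, so that each Gabriel quiver supports only finitely many isomorphism classes of $m$-cluster tilted algebras.'' That inference does not follow: knowing that every path through two oriented cycles vanishes bounds the dimension of the algebra, but a fixed finite quiver together with a bound on dimension does not in general admit only finitely many isomorphism classes of admissible quotients (relations can vary in continuous families), and whether an $m$-cluster tilted algebra is determined by its quiver is precisely the kind of nontrivial statement one cannot wave at. Without a valid implication into (1), your cycle leaves (1) disconnected from the rest. The paper avoids this entirely by proving (4) $\Rightarrow$ (1) instead: since $[1]$ is an autoequivalence of $\mathcal{C}_H^m$, one has $\End_{\mathcal{C}_H^m}(T)\cong\End_{\mathcal{C}_H^m}(T[i])$, so the $m$-cluster tilted algebra depends only on the $\tau$-orbit of $T$, and finitely many $\tau$-orbits immediately give finitely many algebras. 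You should replace your (2) $\Rightarrow$ (1) step by this argument (or otherwise supply a real proof that the algebra is determined, up to finitely many choices, by its Gabriel quiver).
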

\begin{proof}

(1) implies (2) and (4) implies (5) is clear. 

(2) implies (3): Suppose there are only a finite number of quivers
occurring for $m$-cluster tilted algebras associated with $H$, and let
$u$ be the maximal number of arrows between to vertices in the
quiver. Then by Lemma \ref{brlemma2}, for any two indecomposable summands
$T_1$ and $T_2$ of an $m$-cluster tilting object $T$, $\dim
\Hom_{\mathcal{C}_H^m}(T_1,T_2) < u^{2n}$, where $n$ is the number of
simple $H$-modules. Then it follows from Lemma \ref{brlemma1} that $H$
is not wild with more than $3$ simples. 

(3) implies (4): If $H$ is of finite representation type this is
clear, since we only have a finite number of indecomposables.

Next, suppose $H$ has at most two non-isomorphic simple modules. If
there is only one simple module we have $H \simeq k$, so we can assume there
are two simples. Suppose $R$ is a regular indecomposable
$H$-module. Then it follows from Lemma \ref{kQ1} that $R$ is not
rigid, i.e. $\Ext^{1}_{\mathcal{C}_{H}^{m}}(R,R) \neq 0$. Then
we also have that $\Ext^{1}_{\mathcal{C}_{H}^{m}}(R[i],R[i]) \neq 0$
for any $i \in \{1,2,...,m-1\}$. Up to $\tau$ in $\mathcal{C}_{H}^{m}$
we can assume that an $m$-cluster tilting object has a direct summand
which is a shift of a projective $H$-module, say $P[j]$. Then $P[j]$
has $m+1$ indecomposable complements. It follows that there are only a
finite number of $m$-cluster tilting objects up to $\tau$, since there
are only a finite number of choices for $P[j]$. 

Suppose $H$ is tame. By Proposition \ref{thm_lem}, an $m$-cluster
tilting object has at least one direct summand which is a shift of a
projective or injective, and hence up to $\tau$ we can assume it has
an indecomposable direct summand which is a shift of a
projective. From Lemma \ref{projectiveshift} we have that there is
only a finite number of $m$-cluster tilting objects with a shift of an
indecomposable projective $H$-module as a direct summand. 

(5) implies (6): This is clear, since mutation of $m$-cluster tilting
objects corresponds to mutation of coloured quivers.   

We have that (4) implies (1) by using Lemma
\ref{colouredquiveriso}. (6) implies (2) is trivial, and so we are
done. 
\end{proof}

We get the following corollary. 

\begin{cor}
A coloured quiver $Q$ corresponding to an $m$-cluster tilting object,
has finite mutation class if and only if $Q$ is mutation equivalent to
a quiver $Q'$, where $Q'_G$ has underlying graph Dynkin or extended
Dynkin, or it has at most two vertices, and there are only arrows of
colour $0$ and $m$ in $Q'$.
\end{cor}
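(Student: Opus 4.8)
The plan is to read the corollary off Theorem \ref{main1}, using the initial coloured quiver of $\mathcal{C}_H^m$ as the bridge between the combinatorial hypothesis on $Q'$ and the representation type of $H$. Write $H = kQ_0$ for the connected acyclic quiver $Q_0$ whose $m$-cluster category contains the $m$-cluster tilting object giving rise to $Q$ (such an object exists by Lemma \ref{colouredquivertilting}), and let $Q^{(0)}$ be the standard coloured quiver of $Q_0$, whose colour-$0$ arrows are the arrows of $Q_0$ and whose colour-$m$ arrows are their reverses. First I would record that $Q$ and $Q^{(0)}$ lie in the same mutation class: by Proposition \ref{tiltreached} any two $m$-cluster tilting objects of $\mathcal{C}_H^m$ are linked by iterated mutation, and by the theorem of Buan and Thomas mutation of tilting objects corresponds to coloured quiver mutation, so $Q^{(0)}$ and $Q$ are mutation equivalent. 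Consequently the mutation class of $Q$ is finite if and only if that of $Q^{(0)}$ is, which by the equivalence of conditions (3) and (6) in Theorem \ref{main1} happens exactly when the underlying graph of $Q_0$ is Dynkin, extended Dynkin, or has at most two vertices.

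For the forward implication I would simply take $Q' := Q^{(0)}$. If the mutation class of $Q$ is finite, then the underlying graph of $Q_0 = Q'_G$ is of the required type by the paragraph above, while $Q'$ has arrows only of colours $0$ and $m$ by construction and satisfies $Q \sim Q'$; this exhibits the desired representative.

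For the converse, assume $Q$ is mutation equivalent to a coloured quiver $Q'$ having arrows only of colours $0$ and $m$ with $Q'_G$ of the stated type. The crucial step is to identify such a $Q'$ with a standard coloured quiver built from its Gabriel quiver $R := Q'_G$, that is, with the initial coloured quiver of $\mathcal{C}_{kR}^m$. Here having no arrow of intermediate colour forces the middle term $B_i^{(c)}$ of every exchange triangle $T_i^{(c)} \rightarrow B_i^{(c)} \rightarrow T_i^{(c+1)}$ to vanish for $0 < c < m$, so the intermediate complements are successive shifts of one another and the corresponding $m$-cluster tilting object is, up to $\tau$, a tilting module concentrated in a single degree, with $R$ acyclic. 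Since the underlying graph of $R$ is Dynkin, extended Dynkin, or has at most two vertices, the algebra $kR$ is of finite or tame type or has at most two simples, so the implication $(3) \Rightarrow (6)$ of Theorem \ref{main1}, applied to $kR$, shows that the mutation class of $Q'$, and hence that of $Q$, is finite.

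The main obstacle is precisely this structural identification in the converse: that having arrows only of colours $0$ and $m$ forces the underlying $m$-cluster tilting object to come from a tilting module and pins down $R = Q'_G$ as an acyclic orientation, so that the representation type visible in the underlying graph of $Q'_G$ agrees with the type of $H$ governing the whole mutation class. Making this precise requires unwinding the exchange triangles and invoking the derived invariance of the representation type; everything else — transporting finiteness across the mutation equivalence $Q \sim Q'$, and matching the three graph conditions (Dynkin, extended Dynkin, at most two vertices) with the three type conditions (finite, tame, at most two simples) — is routine bookkeeping.
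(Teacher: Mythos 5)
Your overall route is the one the paper intends: the corollary is stated without proof as a direct consequence of Theorem \ref{main1}, and your forward direction --- take $Q'$ to be the standard coloured quiver of $Q_0$, note it lies in the mutation class of $Q$ by Proposition \ref{tiltreached}, and apply $(6)\Rightarrow(3)$ --- is exactly right. In the converse, though, the ``structural identification'' you present is at once more complicated and less secure than it needs to be. The clean observation is purely combinatorial: by property (3) of coloured quivers, a quiver $Q'$ with arrows only of colours $0$ and $m$ is already the standard coloured quiver of its Gabriel quiver $R=Q'_G$ (each colour-$0$ arrow is paired with a reversed colour-$m$ arrow and vice versa), so by the paper's discussion preceding Figure \ref{figmutationclass} its mutation class is the set of coloured quivers of $m$-cluster tilting objects in $\mathcal{C}_{kR}^m$, and $(3)\Rightarrow(6)$ applied to $kR$ gives finiteness; no exchange triangles are needed. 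Your actual argument has a gap here: from $B_i^{(c)}=0$ for $0<c<m$ you cannot conclude that the underlying tilting object is a module concentrated in a single degree (that condition only says the intermediate complements of $T/T_i$ are successive shifts of one another; it says nothing about the degrees of the summands of $T$ itself), and in particular the claim that $R$ is thereby ``pinned down as an acyclic orientation'' is unsupported. Acyclicity of $R$ is genuinely needed to run the argument through $kR$: it is automatic for Dynkin underlying graphs (trees) and for at most two vertices (properties (2) and (3) forbid $2$-cycles when $m\geq 1$), but a cyclically oriented $\widetilde{A}_n$ is a real possibility --- for $m=1$ such Gabriel quivers with only colours $0$ and $1$ do occur --- and must be handled separately, e.g.\ by one further mutation at a vertex of the cycle. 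The paper glosses over this point as well, but since you singled it out as the crux of the converse, the justification you offer for it does not hold up as written.
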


\textbf{Acknowledgements:} The author would like to thank Aslak Bakke
Buan for valuable discussions and comments.

\small

\normalsize

\end{document}